\theoremstyle{plain}
\newtheorem{teo}{Theorem}[section]
\newtheorem{thm}[teo]{Theorem}
\newtheorem{prop}[teo]{Proposition}		
\theoremstyle{definition}
\newtheorem{rmk}[teo]{Remark}
\DeclareMathOperator{\interior}{int}
\DeclareMathOperator{\dist}{dist}
\newcommand{\expc}{\eta}
\newcommand{\R}    {\mathbb R}
\newcommand{\Z}  {\mathbb Z}
\renewcommand{\epsilon}{\varepsilon}
\newcommand{\fFR}{f_{\text{FR}}}
\author[Alfonso Artigue]{}
\email{artigue@unorte.edu.uy}
\title[Expansive homeomorphisms of 3-manifolds]{An expansive homeomorphism of a 3-manifold with a local stable set that is not locally connected}
\subjclass{Primary: 37B45; Secondary: 37B05.}
\keywords{topological dynamics, expansive homeomorphism, continuum theory}
\begin{document}
\begin{abstract}
In this article we construct an expansive homeomorphism of a compact three-dimensional manifold
with a fixed point whose local stable set is not locally connected. 
This homeomorphism is obtained as a topological perturbation of a quasi-Anosov diffeomorphism that is not Anosov.
\end{abstract}

\maketitle

\centerline{\scshape Alfonso Artigue}
\medskip
{\footnotesize
 \centerline{Departamento de Matem\'atica y Estad\'\i stica del Litoral}
   \centerline{Universidad de la Rep\'ublica}
   \centerline{Gral Rivera 1350, Salto, Uruguay}
}

\section{Introduction}
A homeomorphism $f$ of a metric space $(M,\dist)$ is \emph{expansive} if there is
$\expc>0$ such that if $x,y\in M$ and $\dist(f^n(x),f^n(y))\leq\expc$ for all $n\in\Z$ then
$x=y$. 
Expansivity is a well known property of Anosov diffeomorphisms and hyperbolic sets. 
Also, pseudo-Anosov \cite{Hi,L} and quasi-Anosov diffeomorphisms \cite{FR} are known to be expansive.
In \cite{Hi,L}, Hiraide and Lewowicz proved that on compact surfaces, expansive homeomorphisms are 
conjugate to pseudo-Anosov diffeomorphisms. In particular, if $M$ is the two-torus then $f$ is conjugate to an 
Anosov diffeomorphism and 
there are no expansive homeomorphisms on the two-sphere. 

In \cite{Vi2002} Vieitez proved that if $f$ is an expansive diffeomorphism of a compact three-dimensional manifold and 
$\Omega(f)=M$ then $f$ is conjugate to an Anosov diffeomorphism.
We recall that $x\in M$ is a \emph{wandering point} if there is an open set $U\subset M$ such that $x\in U$ and $f^n(U)\cap U=\emptyset$ 
for all $n\neq 0$. 
The set of non-wandering points is denoted as $\Omega(f)$.
Given a homeomorphism $f\colon M\to M$ and $\epsilon>0$ small, the $\epsilon$-\emph{stable set} (\emph{local stable set}) of a point $x\in M$
is defined as
\[
  W^s_\epsilon(x)=\{y\in M:\dist(f^n(x),f^n(y))\leq\epsilon \hbox{ for all }n\geq 0\}.
\]
On three-dimensional manifolds there are several open problems. For instance, it is not known 
whether the three-sphere admits expansive homeomorphisms. 
In that article Vieitez asks:
allowing wandering points, can we have points with local stable 
sets that are not manifolds for $f\colon M\to M$, an expansive diffeomorphism
defined on a three-dimensional manifold $M$?
We remark that in the papers by Hiraide and Lewowicz that we mentioned, 
before proving that stable and unstable sets form 
pseudo-Anosov singular foliations, they show that local stable sets are locally connected.

The purpose of this paper is to give a positive answer to Vieitez' question for homeomorphisms.
We will construct an expansive homeomorphism on a three-dimensional manifold
with a point whose local stable set is not locally connected 
and $\Omega(f)$ consists of an attractor and a repeller.
The construction follows the ideas in \cite{ArAn} 
where it is proved that the compact surface of genus two admits 
a continuum-wise expansive homeomorphism with a fixed point 
whose local stable set is not locally connected.
The key point for the present construction is that on a three-dimensional manifold there is enough room 
to place two one-dimensional continua meeting in a singleton, even if one of these continua is not locally connected.
In Remark \ref{rmkNoC1} we explain why our example is not $C^1$.

\section{The example}
\label{secAno}

The example is a $C^0$ perturbation of the quasi-Anosov diffeomorphism of \cite{FR}. 
The perturbation will be obtained by a composition with a homeomorphism that is close to the identity. 
This homeomorphism will be defined in local charts around a fixed point and will be extended as the identity outside this chart. 
In \S \ref{secPertR3} we construct this perturbation in $\R^3$ obtaining a homeomorphism such that the stable set of the origin $(0,0,0)$ is 
not locally connected. 
Then, in \S \ref{secPertQA} we perform the perturbation of the quasi-Anosov diffeomorphism to obtain our example.
% Let $\varphi\colon U\to V$ be a $C^0$ local chart where $U\subset \R^3$ and $V\subset M$ are open sets.
% We assume that $q=(0,0,0)\in U$ and that $\varphi(q)=p$. 
% For $x\in U$ we denote by $W^\sigma(x)$ the component of $\varphi^{-1}(W^\sigma(\varphi(x)))$ that contains $x$, where $\sigma=s,u$.

\subsection{In local charts}
\label{secPertR3}
Fix $r\in (0,1/2)$ and let $B\subset \R^3$ be the closed ball of radius $r$ centered at $(\frac 32,0,0)$.
Let $T_2\colon\R^3\to\R^3$ be defined as 
$$T_2(x,y,z)=\frac 12(x,y,z).$$ 
Define $B_n=T_2^n(B)$ for $n\in\Z$. 
Let $E\subset B$ be the non-locally connected continuum given by the union of the following segments:
\begin{itemize}
 \item a segment parallel to $(1,0,0)$: $\left[\frac 32-\frac r2,\frac32+\frac r2\right]\times\left\{(0,0)\right\}$,
 \item a segment parallel to $(0,1,0)$: $\left\{\frac 32-\frac r2\right\}\times\left[0,\frac r2\right]\times \{0\}$,
 \item a countable family of segments parallel to $(0,1,0)$: 
 $\left\{\frac 32-\frac r2+\frac rk\right\} \times \left[0,\frac r2\right] \times \{0\}$, 
 for $k\geq 1$.
\end{itemize}
% \[
%  E=
%  \cup
%  \cup_{k\geq 1}
%  
%  \times\left\{0\right\}.
% \]
% 
% 
Consider $\rho\colon B\to [0,1]$ a smooth function such that $\rho^{-1}(1)=E$ and $\rho^{-1}(0)=\partial B$.
Define a vector field $X\colon\R^3\to\R^3$ by 
\[
 X(x,y,z)=\left\{
 \begin{array}{ll}
  (0,-\rho(T_2^{-n}(x,y,z))y\log 4,0) & \text{ if }\exists n\in\Z\text{ s.t. }(x,y,z)\in B_n,\\
  (0,0,0)&\text{ otherwise.}
 \end{array}
 \right.
\]
% $X^*(x,y,z)=(0,-y\log 4,0)$. 
% Define $X_0$ the vector field on $B$ given by $\rho X^*$.
% For each $n\in\Z$ consider the vector field $X^*_n\colon B_n\to\R^3$ given by $X^*_n(x,y,z)=T_2^n(X_0(T_2^{-n}(x,y,z)))$.
% Let $X\colon\R^3\to\R^3$ be the vector field such that 
% $X|_{B_n}=X^*_n$ and $X(x,y,z)=0$ if $(x,y,z)\notin B_n$ for all $n\in\Z$.

\begin{rmk}
 The vector field $X$ induces a flow $\phi\colon\R\times\R^3\to\R^3$. 
 The proof is as follows.
 Notice that $X$ is smooth on $\R^3\setminus \{(0,0,0)\}$. 
 Since the regular orbits (i.e., not fixed points) are contained in the compact balls $B_n$ we have that these trajectories are defined for all $t\in\R$. 
 At the origin we have a fixed point, and the continuity of $X$ gives the continuity of the complete flow on $\R^3$.
\end{rmk}

Let $\phi_t$ be the flow on $\R^3$ induced by $X$. For $t=1$ we obtain the so called \emph{time-one map} $\phi_1$.
Let $$E_k=T_2^k(E)$$ for all $k\in\Z$. 

\begin{rmk}
\label{rmkNoC1}
 The homeomorphism $\phi_1$ is not $C^1$. 
 This is because the partial derivative $\frac{\partial \phi_1}{\partial y} (x_*,0,0)=(0,1/4,0)$ if 
 $(x_*,0,0)$ is in one of the segments of $E_k$ that is parallel to $(0,1,0)$. 
 Since $\frac{\partial \phi_1}{\partial y} (0,0,0)=(0,1,0)$ and $x_*>0$ can be taken arbitrarily small, 
 we conclude that $\frac{\partial \phi_1}{\partial y}$ is not continuous at the origin.
\end{rmk}

Define the homeomorphisms $T_1,f\colon \R^3\to \R^3$ as 
\begin{equation}
\label{ecuDefT1}
T_1(x,y,z)=(x/2, 2y,2z)
\end{equation}
and
\[
f=T_1\circ\phi_1.
\]
Consider the set 
$\tilde W=\cup_{k\in\Z}T_2^k(E)\cup\{(x,0,0):x\in\R\}$.
The \emph{(global) stable set} of a point $a\in\R^3$ associated to the homeomorphism $f$ 
is the set 
\[
 W^s_f(a)=\{b\in\R^3:\dist(f^n(a),f^n(b))\to 0\text{ as }n\to +\infty\}.
\]
\begin{prop}
\label{propConjEst}
 It holds that $W^s_f(0,0,0)=\tilde W$.
\end{prop}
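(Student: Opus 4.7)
\emph{Strategy.} I will establish the two inclusions $\tilde W\subset W^s_f(0,0,0)$ and $W^s_f(0,0,0)\subset\tilde W$ separately; the second is substantially harder.

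\emph{The inclusion $\tilde W\subset W^s_f(0,0,0)$.} The first observation is that $f$ and $T_2$ commute. The substitution $n\mapsto n+1$ in the formula for $X$ yields $X(T_2p)=\tfrac12 X(p)=dT_2\cdot X(p)$, so by uniqueness of integral curves $T_2\circ\phi_1=\phi_1\circ T_2$; since $T_1$ and $T_2$ are simultaneously diagonal they also commute, hence $f\circ T_2=T_2\circ f$. A direct calculation then shows $f|_E=T_2|_E$: on the horizontal segment $\phi_1$ is the identity and $T_1$ halves $x$; on each vertical segment the trajectory remains in $E$ (where $\rho\equiv 1$), so $\phi_1$ multiplies $y$ by $1/4$ and the subsequent $T_1$ gives exactly $T_2$. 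Hence $f(E_k)=E_{k+1}$, and since $E_{k+n}\subset B_{k+n}$ shrinks to the origin as $n\to\infty$, every point of $\bigcup_k E_k$ lies in $W^s_f(0,0,0)$. Points on the $x$-axis are fixed by $\phi_1$ and contracted to the origin by $T_1$, hence also lie in $W^s_f(0,0,0)$.

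\emph{The inclusion $W^s_f(0,0,0)\subset\tilde W$.} Let $p_0=(x_0,y_0,z_0)\in W^s_f(0,0,0)$ and $p_n=f^n(p_0)$. Since $X$ has no $z$-component and $T_1$ doubles $z$, one has $z_n=2^n z_0$, forcing $z_0=0$. The $x$-projections of the balls $B_k$ are disjoint intervals in $(0,\infty)$ separated by gaps (using $r<1/2$); the map $x\mapsto x/2$ sends $x$-ranges to $x$-ranges and gaps to gaps. If $x_0\le 0$ or $x_0$ lies in a gap, the orbit never enters $\bigcup_k B_k$, so $f$ acts as $T_1$, $y_n=2^n y_0$, and convergence forces $y_0=0$, placing $p_0$ on the $x$-axis. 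Otherwise $x_0$ lies in the $x$-range of a unique $B_{k_0}$, and $x_n$ lies in that of $B_{k_0+n}$ for every $n\ge 0$. Consider the rescaled orbit $q_n=T_2^{-(k_0+n)}(p_n)$; by the commutation $q_{n+1}=2f(q_n)$, $q_n$ has constant $x$-coordinate $x_*=2^{k_0}x_0\in(\tfrac32-r,\tfrac32+r)$, has $z=0$, and $q_n\in B\iff p_n\in B_{k_0+n}$. Comparing the inequalities defining $B_{k_0+n}$ and $B_{k_0+n+1}$ shows that if $p_n\notin B_{k_0+n}$ and $y_n\ne 0$, then $p_{n+1}\notin B_{k_0+n+1}$; so once the orbit exits $\bigcup_k B_k$ it stays out, and $y_n$ doubles each step, contradicting $p_n\to 0$ (using that $\phi_1$ preserves $\{y\ne 0\}$, so $y_0\ne 0$ forces $y_n\ne 0$ for all $n$). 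Hence $q_n\in B$ for every $n\ge 0$, and the $y$-dynamics at the fixed value $x_*$ becomes $Y_{n+1}=Y_n\cdot 4^{1-I_n}$, where $Y_n$ is the $y$-coordinate of $q_n$ and $I_n=\int_0^1\rho(x_*,Y_n(s),0)\,ds\in[0,1]$.

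\emph{The 1D analysis and main obstacle.} Equality $I_n=1$ holds iff the full trajectory lies in $E$, which requires $x_*$ to be one of the special values $\tfrac32-\tfrac r2+\tfrac rk$ ($k\ge 1$) or $\tfrac32-\tfrac r2$ and $Y_n\in(0,r/2]$. The remaining step is to rule out $I_n<1$ at any $n$: in that case $|Y_{n+1}|>|Y_n|$ strictly, and since $|Y_n|$ is bounded above by the vertical half-width $\sqrt{r^2-(x_*-\tfrac32)^2}$ of $B$ at $x_*$, $|Y_n|$ converges to some $Y^*$ at which the continuous map $Y\mapsto Y\cdot 4^{1-I(Y)}$ has a fixed point, forcing $I(Y^*)=1$; this contradicts the strict inequality $\rho<1$ off $E$. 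I anticipate this to be the main technical hurdle: the argument must handle separately the regimes $Y_n>r/2$, $Y_n\in(0,r/2]$ with $x_*$ non-special, and $Y_n<0$ (where the flow contracts $|Y_n|$ but the subsequent $T_1$ expansion overcompensates because $\rho<1$ below $E$), since it relies on the specific geometry of $\rho$ and $E$ rather than on soft arguments. Once it is complete, $Y_n$ is constant and $q_0$ lies on a vertical segment of $E$, so $p_0=T_2^{k_0}(q_0)\in E_{k_0}\subset\tilde W$.
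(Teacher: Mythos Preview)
Your proposal is correct and follows essentially the same strategy as the paper. Both proofs use the commutation $f\circ T_2=T_2\circ f$ to reduce the second inclusion to a one-dimensional problem on a fixed vertical line, derive the recursion $Y_{n+1}=Y_n\cdot 4^{1-I_n}$ (in the paper's language, $g(y)=4\alpha(1)>y$ because $\int_0^1\rho<1$ off $E$), and finish with the monotone--bounded--fixed-point argument: a strictly increasing bounded sequence $|Y_n|$ would converge to a fixed point $Y^*$ forcing $I(Y^*)=1$, hence $(x_*,Y^*,0)\in E$, which is impossible given $|Y^*|>|Y_0|$ and the shape of $E$.

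Two minor comments. First, your worry about the ``main technical hurdle'' is unwarranted: the fixed-point argument you wrote handles the three regimes $Y_n>r/2$, $Y_n\in(0,r/2]$ with $x_*$ non-special, and $Y_n<0$ uniformly, since in each case $I(Y^*)=1$ would force $(x_*,Y^*,0)\in E$ while $|Y^*|\geq|Y_0|$ keeps it outside $E$; no separate geometric analysis of $\rho$ is needed. The paper phrases this compactly by setting $y_0=\sup\{s\ge 0:(a_0,s,0)\in\tilde W\}$ and observing $g(y)>y$ for $y>y_0$. Second, your treatment of the ``once the orbit exits $\bigcup_k B_k$ it stays out'' step is actually more explicit than the paper's, which tacitly assumes the dichotomy ``either $p$ is in no ball, or $f^n(p)\in B_n$ for all $n$'' and leaves the reader to fill the gap you addressed directly.
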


\begin{proof}
We start proving the inclusion $\tilde W\subset W^s_f(0,0,0)$. 
We will show that $f(E_k)=E_{k+1}$. 
By the definition of the flow we have that $$\phi_1(E_k)=\{(x,y/4,0):(x,y,0)\in E_k\}.$$ 
Then $T_1(\phi_1(E_k))=\{(x/2,y/2,0):(x,y,0)\in E_k\}$ and this set is $T_2(E_k)=E_{k+1}$. 
Since $E_k\to (0,0,0)$ as $k\to+\infty$ we conclude that $E_k\subset W^s_f(0,0,0)$ for all $k\in\Z$. 
Also, $f(x,0,0)=(x/2,0,0)$ for all $x\in\R$. This proves that
$\{(x,0,0):x\in\R\}\subset W^s_f(0,0,0)$. 

Take $p\notin \tilde W$. 
Note that if $p\notin B_n$ for all $n\in\Z$ then $f^n(p)=T_1^n(p)$ for all $n\geq 0$ 
and $p\notin W^s_f(0,0,0)$.
Assume that $f^n(p)\in B_n$ for all $n\in\Z$ and define 
$$(a_n,b_n,c_n)=f^n(p).$$ 
If $c_0\neq 0$ then $c_n=2^nc_0\to\infty$. 
Thus, we assume that $c_0=0$. 
Suppose that $b_0>0$ (the case $b_0<0$ is analogous). 
Let $y_0=\sup\{s\geq 0:(a_0,s,0)\in\tilde W\}$. 
Define $l=\{(a_0,y,0):y\in\R\}$ and $g\colon l\to l$ by the equation 
\[
 T_2^{-1}\circ f(a_0,y,0)=(a_0,g(y),0).
\]

In this paragraph we will show that $g(y)>y$ for all $y>y_0$. 
Let $\alpha\colon\R\to\R$ be such that $(a_0,\alpha(t),0)=\phi_t(a_0,y,0)$ for all $t\in\R$. 
That is, $\alpha$ satisfies $\alpha(0)=y$ and 
$\dot\alpha(t)=-\rho(a_0,\alpha(t),0)\alpha(t)\log 4$.
Since $(a_0,y,0)\notin \tilde W$ we have that $$\rho(a_0,\alpha(0),0)<1.$$
Then 
$$\int_0^1\frac{\dot\alpha(t)}{\alpha(t)}dt=-\int_0^1\log 4\rho(a_0,\alpha(t),0)dt>-\log 4$$
and $\log(\alpha(1))-\log(\alpha(0))>-\log 4$. 
Consequently, $\alpha(1)>\frac14\alpha(0)$.
Notice that
\[
\begin{array}{ll}
(a_0,g(y),0)&=T_2^{-1}\circ f(a_0,y,0)=T_2^{-1}\circ T_1\circ\phi_1(a_0,y,0)
=T_2^{-1}\circ T_1(a_0,\alpha(1),0)\\
&=T_2^{-1}(a_0/2,2\alpha(1),0)
=(a_0,4\alpha(1),0).
\end{array}
\]
Then $g(y)=4\alpha(1)$ and $g(y)>y$.

Recall that $X$ is the vector field that defines $\phi$. 
Since $X\circ T_2=T_2\circ X$, as can be easily checked, we have that $f\circ T_2= T_2\circ f$. This implies that 
\[
\begin{array}{ll}
 (a_0,g^n(b_0),0)& =(T_2^{-1}\circ f)^n(a_0,b_0,0)
 = T_2^{-n}\circ f^n(a_0,b_0,0)\\
 &= T_2^{-n}(a_n,b_n,0) 
 = 2^{-n}(a_n,b_n,0)
 \end{array}
\]
and $g^n(b_0)=2^{-n}b_n$. 
Since we are assuming that $f^n(p)\in B_n$ for all $n\in\Z$, we have that $2^{-n}b_n$ is bounded. But, as $b_0>y_0$ and $g(y)>y$ for all $y>y_0$ we have that 
$g^n(b_0)$ is increasing and bounded. 
Then, if $b_*>y_0$ is the limit of $g^n(b_0)$ we have that $g(b_*)=b_*$, 
which contradicts that $g(y)>y$ for all $y>y_0$. 
This implies that $f^n(p)$ cannot be in $B_n$ for all $n\geq 0$, and as we said, this shows that $p\notin W^s_f(0,0,0)$. 
This proves the inclusion $W^s_f(0,0,0)\subset \tilde W$.
\end{proof}

\begin{rmk}
By the definition of the vector field $X$ we see that $\phi_1$ 
preserves the horizontal planes (i.e., the planes perpendicular to $(0,0,1)$). 
Also, $\phi_1$ leaves invariant the cube $[-2,2]^3$ and is the identity in its boundary.
\end{rmk}

\subsection{The local perturbation of the quasi-Anosov}
\label{secPertQA}
To construct our example we start with a quasi-Anosov diffeomorphism as in \cite{FR}. 
A \emph{quasi-Anosov diffeomorphism} is an axiom A diffeomorphism of $M$ such that $T_x W^s(x)\cap T_x W^u(x)=0_x$ for all $x\in M$, 
where $T_x W^\sigma(x)$, $\sigma=s,u$, denotes the tangent space of the stable or unstable manifold $W^\sigma(x)$ at $x$ and $0_x$ is the null vector of $T_x M$.
The quasi-Anosov diffeomorphism of \cite{FR}, that will be denoted as $\fFR\colon M\to M$, has the following properties: 
it is defined on a three-dimensional manifold, it is not Anosov and its non-wandering set is the union of two basic sets. 
The basic sets are an expanding attractor and a shrinking repeller. Both sets are two-dimensional and locally they are 
homeomorphic to the product of $\R^2$ and a Cantor set. 
% The attractor will be denoted as $A\subset M$. 

On the attractor there is a hyperbolic fixed point $p$. 
% In a local chart around $p$ we can assume that $\fFR(x,y,z)=(x/2,2y,2z)$. 
% Also, we will assume that stable sets are lines parallel to $(1,0,0)$.
% 
% 
% Consider the quasi-Anosov $\fFR\colon M\to M$ on the three-dimensional manifold $M$. 
Take closed balls $U,V\subset M$ and
$C^0$ local charts 
$\varphi\colon [-2,2]^3\to U\subset M$ and $\psi\colon [-1,1]\times [-4,4]^2\to V\subset M$ satisfying the following conditions: 
\begin{enumerate}
 \item[C0:] $\varphi|_{[-1,1]\times[-2,2]^2}=\psi|_{[-1,1]\times[-2,2]^2}$,
 \item[C1:] $\fFR|_U=\psi\circ T_1\circ\varphi^{-1}$ where $T_1$ was defined in \eqref{ecuDefT1},
 \item[C2:] in the local charts stable sets of $\fFR$ are lines parallel to $(1,0,0)$,
 \item[C3:] there is $r\in (0,1/2)$ such that $W^u_{\fFR}(q)$ in the local chart 
 is transverse to the horizontal planes if $\varphi^{-1}(q)$ is in a neighborhood of $B$ where, as in \S \ref{secPertR3}, $B\subset \R^3$ is the 
 ball of radius $r$ centered at $(3/2,0,0)$,
 \item[C4:] if
$
 \tilde B_n=\varphi(B_n)
$
for all $n\geq 0$, we assume that $\fFR^k(\tilde B_0)\cap \tilde B_0=\emptyset$ for all $k\geq 1$.
% $\varphi^{-1}(\Omega(\fFR))\subset\{(x,y,z)\in [-2,2]^3: x\leq 0\}$.
\end{enumerate}
Let $\tilde\phi\colon M\to M$ be the homeomorphism given by 
\[
 \tilde\phi(x)=\left\{
 \begin{array}{ll}
  \varphi\circ \phi_1\circ\varphi^{-1}(x) & \text{ if } x\in U,\\
  x & \text{ if } x\notin U,
 \end{array}
 \right.
\]
where $\phi_1$ is the time-one of the flow induced by the vector field $X$ of \S \ref{secPertR3}.
Define the homeomorphism $\tilde f\colon M\to M$ as 
$$\tilde f=\fFR\circ\tilde\phi$$ 
Notice that 
\begin{equation}
 \label{ecuIgFrT}
 \tilde \phi(x)=x\text{ for all }x\notin \cup_{n\geq 0} \tilde B_n.
\end{equation}
% Also, $\tilde f(\tilde B_n)=\fFR(\tilde B_n)$ for all $n\geq 0$.
% From these properties it follows that 
% $\Omega(\tilde f)=\Omega(\fFR)$. 
% This set will be denoted simply as $\Omega$.
% It also holds that
% $\fFR|_\Omega=\tilde f|_\Omega$.
% 
% \begin{lem}
%  A homeomorphism $f\colon M\to M$ of a compact metric space is expansive if and only if $f$ is expansive on $\Omega(f)$ and 
%  $M\setminus\Omega(f)$.
% \end{lem}
\begin{rmk}
This implies that if $r$ is small then $\tilde f$ is close to $\fFR$ in the $C^0$ topology of homeomorphisms of $M$.
% If the local charts around $p$ are small then $\tilde f$ is close to $\fFR$ in the $C^0$ metric.
% o lo controlo con el $r$ de la carta local, el tamaño de las bolas. 
\end{rmk}

\begin{thm}
The homeomorphism $\tilde f\colon M\to M$ is expansive and the local stable set of the fixed point $p$ is connected but not locally connected,		
\end{thm}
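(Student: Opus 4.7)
The theorem has two parts: (a) $\tilde f$ is expansive, and (b) the local stable set $W^s_{\tilde f,\epsilon}(p)$ is connected but not locally connected. I would treat (b) first, since it follows almost mechanically from \S\ref{secPertR3}, and then attack the more technical (a).

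For (b), by conditions C0--C1 and the definition of $\tilde\phi$, on a sufficiently small neighborhood of $p$ one has $\tilde f = \varphi\circ(T_1\circ\phi_1)\circ\varphi^{-1}$, i.e., the chart conjugate of the $\R^3$-map $f = T_1\circ\phi_1$ of \S\ref{secPertR3}. Then Proposition~\ref{propConjEst} identifies the local stable set: for $\epsilon$ small, $W^s_{\tilde f,\epsilon}(p) = \varphi(\tilde W \cap \varphi^{-1}(B_\epsilon(p)))$. This set is connected because each comb $E_k$ meets the $x$-axis at its spine of teeth, so every piece of the truncated $\tilde W$ is joined to the segment of the $x$-axis through the origin. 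Failure of local connectedness at $p$ follows by the argument in \cite{ArAn}: any neighborhood of the origin contains infinitely many of the comb structures $E_k$, each itself non-locally connected at its limit tooth, and this accumulation obstructs local connectedness at $p$.

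For (a), fix an expansivity constant $\eta_0$ for the quasi-Anosov $\fFR$ and choose $\eta < \eta_0$ sufficiently small (in particular much smaller than the chart radius). Suppose $x\neq y$ with $\dist(\tilde f^n(x),\tilde f^n(y))\leq\eta$ for all $n\in\Z$; the goal is $x = y$. The structural point is that $\tilde f = \fFR$ on $M\setminus\cup_n\tilde B_n$, a complement of a disjoint family of balls accumulating only at $p$. By C4 the ball $\tilde B_0$ is $\fFR$-wandering, and using that $\tilde f$ and $\fFR$ agree off the perturbation region this propagates to each $\tilde B_n$ under $\tilde f$. Hence an orbit of $\tilde f$ not accumulating at $p$ in forward time visits $\cup_n\tilde B_n$ only finitely often and, after some $n_0$, coincides with an $\fFR$-orbit; expansivity of $\fFR$ then gives the separation. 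By the analogous backward argument, if neither orbit accumulates at $p$ in at least one of the two time directions, we conclude $x=y$. The remaining hard case is when both forward orbits accumulate at $p$. Then both orbits lie in the chart for all large $n$; since $\phi_1$ preserves the $z$-coordinate and $T_1$ doubles it, staying in the chart forever forward forces $z=0$, reducing the analysis to horizontal planes. The escape inequality $g(y)>y$ for $y>y_0$ established inside the proof of Proposition~\ref{propConjEst}, combined with the analogous estimate along the $x$-axis in backward time, then separates the two distinct points.

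The main obstacle I anticipate is the careful propagation of the wandering condition C4 from $\fFR$ to $\tilde f$ and the precise bookkeeping for orbits that weave in and out of the chart and of the family $\cup_n\tilde B_n$. Once this reduction is accomplished, the one-dimensional analysis of \S\ref{secPertR3} finishes the argument.
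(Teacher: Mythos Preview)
Your treatment of (b) matches the paper's: conjugate to the $\R^3$ model via $\varphi$, invoke Proposition~\ref{propConjEst}, and read off connectedness and non-local-connectedness from the explicit shape of $\tilde W$.

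For (a) the paper takes a shorter and structurally different route, and your sketch has a real gap. The paper does \emph{not} run a dichotomy on whether orbits accumulate at $p$. Instead it observes two things: (i) if the $\tilde f$-orbit of $x$ never meets $\cup_{k\ge 0}\tilde B_k$ then it equals the $\fFR$-orbit and $\fFR$-expansivity applies; (ii) since $\tilde B_{n+1}\subset \tilde f(\tilde B_n)$ and C4 holds, any visit to some $\tilde B_{k_0}$ forces a visit to $\tilde B_0$ at an earlier time. So one may assume $x\in\tilde B_0$. There, by C2 and the fact that $\phi_1$ preserves horizontal planes, $W^s_\eta(x)$ lies in a horizontal plane, while by C3 the local unstable set $W^u_\eta(y)$ is transverse to horizontal planes; hence $W^s_\eta(x)\cap W^u_\eta(y)$ is at most a point, which is exactly expansivity.

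The gap in your plan is that you never use C3, and without it the argument cannot close. In your ``hard case'' you reduce to $z=0$ using forward dynamics, and then appeal to the escape inequality $g(y)>y$ from Proposition~\ref{propConjEst}; but that inequality shows a \emph{single} point not in $\tilde W$ eventually leaves the chart---it does not separate two distinct points that both lie in $\tilde W$. For that you need backward information, and your ``analogous estimate along the $x$-axis in backward time'' is only valid for the $\R^3$ model $T_1\circ\phi_1$, not for $\tilde f$ on $M$: once the backward orbit exits the chart, its behavior is governed by the global quasi-Anosov $\fFR$, not by $T_1^{-1}$. That global backward control is precisely what C3 encodes (transversality of $W^u_{\fFR}$ to horizontal planes near $\tilde B_0$), and it is the missing ingredient in your outline. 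Similarly, in your ``easy'' case, knowing that both orbits eventually coincide with $\fFR$-orbits in forward time only yields $y\in W^s_\eta(x)$; concluding $x=y$ again requires the unstable transversality from C3.
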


\begin{proof}
By Proposition \ref{propConjEst} and the condition C1 we have that the local stable set of $p$ is $\varphi(\tilde W)$.
Since $\tilde W$ is not locally connected and $\varphi$ is a homeomorphism we conclude that the 
local stable set of $p$ is not locally connected.

Let us show that $\tilde f$ is expansive. 
By \eqref{ecuIgFrT} we have that $\tilde f$ and $\fFR$ coincide on $M\setminus \cup_{n\geq 0} \interior(\tilde B_n)$.
Therefore, if $\tilde f^n(x)\notin \cup_{k\geq 0}\tilde B_k$ for all $n\in\Z$ then 
$\tilde f^n(x)=\fFR^n(x)$ for all $n\in\Z$.
Let $\expc_1$ be an expansivity constant of $\fFR$. 
Thus, if $x,y\in M$ are such that $\tilde f^n(x),\tilde f^n(y)\notin \cup_{k\geq 0}\tilde B_k$ for all $n\in\Z$ 
then $\sup_{n\in\Z}\dist(\tilde f^n(x),\tilde f^n(y))>\expc_1$.

From our analisys on local charts of \S \ref{secPertR3} we have that
$\tilde B_{n+1}\subset \fFR(\tilde B_n)=\tilde f(\tilde B_n)$ for all $n\geq 0$.
This and condition C4 implies that
if $\tilde f^{n_0}(x)\in \tilde B_{k_0}$ for some $n_0\in\Z$ and $k_0\geq 0$ 
then $\tilde f^{n_0-k_0}(x)\in \tilde B_0$.
Let $\expc\in (0,\expc_1)$ be such that if $x\in \tilde B_0$ and $\dist(y,x)<\expc$ then, 
in the local chart, $W^s_\expc(x)$ is contained in a horizontal plane and $W^u_\expc(y)$ is transverse to the horizontal planes. 
We have applied conditions C2 and C3.
This implies that $W^s_\expc(x)\cap W^u_\expc(y)$ contains at most one point. 
This proves that $\expc$ is an expansivity constant of $\tilde f$.
\end{proof}

\renewcommand{\refname}{REFERENCES}

\end{document}